\newcommand{\Ad}{\operatorname{Ad}}
\newcommand{\Lie}{\operatorname{Lie}}
\renewcommand{\subset}{\subseteq}
\newcommand{\Wh}{\operatorname{Wh}}
\newcommand{\mul}{\operatorname{Mult}}
\newtheorem{theorem}{Theorem}[section]
\newtheorem{lemma}[theorem]{Lemma}
\newtheorem{proposition}[theorem]{Proposition}
\newtheorem{remark}[theorem]{Remark}
\date{\today}
\DeclareMathOperator{\irr}{Irr}
\newcommand{\m}{\mathfrak{m}}
\renewcommand{\subset}{\subseteq}
\newcommand{\inte}{\mathrm{int}}
\begin{document}

\title[LCE for principal series]{On local character expansions for principal series representations of general linear groups}


\begin{abstract}
We obtain two explicit formulas for the full local character expansion of any irreducible representation of a $p$-adic general linear group in principal blocks. The first, generalizing previous work of the author on the Iwahori-spherical case, expresses the expansion in terms of dimensions of degenerate Whittaker models. The second gives a closed expression in terms of values of Kazhdan-Lusztig polynomials of a suitable permutation group.

\end{abstract}

\author{Maxim Gurevich}
\address{Department of Mathematics, Technion -- Israel Institute of Technology, Haifa, Israel.}
\email{maxg@technion.ac.il}

\maketitle

\section{Introduction}
Two areas of study are typical focal points of character theory of representations of reductive $p$-adic groups. One examines how standard representations break down into irreducible subquotients. The second delves into asymptotic properties of irreducible representations, as encoded in microlocal invariants that are associated with adjoint nilpotent orbits of the group. The precise links between these two branches of study, particularly the role of the Kazhdan-Lusztig theory, which is known to govern the former problem, have often remained a subject of folklore.

In this note, we explicate two formulas that regard the Harish-Chandra--Howe local character expansion for principal series of general linear groups, by leveraging the well-developed Zelevinsky theory applicable to this context.

Let $G_n = GL_n(F)$ be the general linear locally compact group, defined over a $p$-adic field $F$. Let $\mathfrak{g}_n = \mathfrak{gl}_n(F)$ be its Lie algebra. The set $P(n)$, consisting of all integer partitions of $n$, naturally parameterizes the nilpotent $\Ad(G_n)$-orbits in $\mathfrak{g}_n$.

For a partition $\alpha\in P(n)$, the orbit $\mathcal{O}_\alpha$ is taken to be the set of nilpotent complex matrices, whose sizes of Jordan blocks are described by the parts of $\alpha$.

Given a finite-length smooth complex $G_n$-representation $\pi$, its trace is known to provide a distribution on the group, which is represented by a locally constant integrable function $\Theta_\pi$ on regular elements. For a regular element $X\in \mathfrak{g}_n$ close enough to zero, the celebrated Harish-Chandra--Howe local character expansion \cite{howe74,hcbook} takes the form
\begin{equation}\label{eq:expa}
\Theta_\pi(1+ X) = \sum_{\alpha\in P(n)} c_\alpha(\pi) \widehat{\mu}_{\mathcal{O}_\alpha}(X),\quad c_\alpha(\pi)\in \mathbb{Z}\;,
\end{equation}
where $\widehat{\mu}_{\mathcal{O}_\alpha},\, \alpha\in P(n)$, are suitably normalized functions representing the $\mathfrak{g}_n$-Fourier transform of the orbital integral distribution coming from the orbit $\mathcal{O}_\alpha$.

In \cite{me-triang}, with an assumption of a good residual characteristic for $F$, a formula was pointed out, which relates the set of integers $\{c_\alpha(\pi)\}_{\alpha\in P(n)}$, for an irreducible Iwahori-spherical $\pi$, with another well-studied set of integer invariants.

The second set is given by the dimensions of certain degenerate Whittaker models attached to the same $G_n$-representation $\pi$ and a nilpotent $\Ad(G_n)$-orbit.


Namely, for $\alpha \in P(n)$, an exact normalized Jacquet functor $\mathbf{r}_{\alpha}$ is defined, which produces smooth $M_\alpha$-representations out of smooth $G_n$-representations, where $M_\alpha < G_n$ is a corresponding Levi subgroup.

We denote the integers
\[
d_\alpha(\pi) = \dim_{\mathbb{C}} \Wh(\mathbf{r}_{\alpha}(\pi)),\; \alpha\in P(n)\;,
\]
where $\Wh$ stands for the Whittaker functor on $M_\alpha$-representations.

Our first goal is to give a generalization of this formula that will encompass all representations in Bernstein blocks that arise from a minimal Levi subgroup (maximal torus), remove the residual characteristic constraint, and offer an alternative method of proof.

Let us make the stated notions more precise.


For a partition $\alpha = (\alpha_1,\ldots,\alpha_k)$ in
\[
P(n) = \left\{ (\alpha_1 \geq \ldots \geq \alpha_k)\,:\, \alpha_i\in \mathbb{Z}_{>0},\; \alpha_1+\ldots + \alpha_k = n\right\}\;,
\]
the standard Levi subgroup $M_{\alpha} <G_n$ consists of block-diagonal matrices, with block sizes $\alpha_1,\ldots,\alpha_k$.

We also write $P_\alpha < G_n$ for the standard parabolic subgroup generated by $M_\alpha$ and the upper-triangular matrices in $G_n$, $N_\alpha< P_\alpha$ for its unipotent radical, and $\mathfrak{n}_\alpha = \Lie(N_\alpha) < \mathfrak{g}_n$.

Now, $\mathcal{O}_\alpha$ becomes the unique nilpotent $\Ad(G_n)$-orbit in $\mathfrak{g}_n$, for which $\mathfrak{n}_{\alpha^t} \cap \mathcal{O}_{\alpha}$ is dense in $\mathfrak{n}_{\alpha^t}$. Here, $\alpha \mapsto \alpha^t$ is the combinatorial transposition involution on $P(n)$.


We say that an irreducible $G_n$-representation $\pi$ is \textit{principal}, when $\mathbf{r}_{(1,1,\ldots,1)}(\pi)\neq 0$. In other words, there exists a character $\chi$ of the maximal torus $(F^\times)^n \cong T= M_{(1,\ldots,1)} < G_n$, for which $\pi$ is a sub-representation\footnote{Note, that in other common terminology a \textit{principal series} representation may refer to situations where $\pi= \mathbf{i}_{(1,\ldots,1)}(\chi)$ is irreducible. This notion is more restrictive than ours.} of $\mathbf{i}_{(1,\ldots,1)}(\chi)$, where $\mathbf{i}_{\alpha}$ is the normalized parabolic induction (adjoint to Jacquet) functor.

\begin{theorem}\label{thm:main}

For each principal irreducible representation $\pi$ of $G_n$ and each partition $\alpha\in P(n)$, the formula
\[
d_\alpha(\pi) = \sum_{\beta\in P(n)} s(\alpha,\beta^t) c_\beta(\pi)
\]
holds, where the coefficients $s(\alpha,\beta)\in \mathbb{Z}_{\geq 0}$ are the familiar combinatorial invariants given as

\begin{equation}\label{eq:graph}
s(\alpha,\beta) = \# \left\{ A \subset \{1,\ldots, k\} \times \{1,\ldots, l\} \;:\; \begin{array}{cc}
\alpha_ i = \# \{ j\;:\; (i,j)\in A\},\; \forall 1\leq i \leq k  \\
\beta_ j = \#\{ i\;:\; (i,j)\in A\},\; \forall 1\leq j \leq l  \end{array} \right\}\;,
\end{equation}

for $\alpha=(\alpha_1,\ldots,\alpha_k), \;\beta = (\beta_1,\ldots,\beta_l)$.

\end{theorem}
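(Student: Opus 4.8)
\emph{Reduction to a basis.} The plan is to turn the identity into a linear statement on a Grothendieck group and check it on Zelevinsky's standard basis, reducing to a single segment by an induction that tracks how both sides transform under parabolic induction. First I would note that, for fixed $\alpha$, the functor $\mathbf{r}_\alpha$ is exact, the Whittaker functor on $M_\alpha$ is exact, and $\dim_\C$ is additive on short exact sequences of the (finite-dimensional) degenerate Whittaker spaces, so $\pi\mapsto d_\alpha(\pi)$ factors through the Grothendieck group $\mathcal{R}$ of finite-length representations with cuspidal support in a fixed principal block; and for fixed $\beta$, $\pi\mapsto c_\beta(\pi)$ is additive because it reads off a coefficient of the Harish-Chandra character $\Theta_\pi$, which is additive. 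Hence it suffices to check the formula on a $\Z$-spanning set of $\mathcal{R}$, and by Zelevinsky's theory such a set is provided by the standard modules $\langle\Delta_1\rangle\times\cdots\times\langle\Delta_r\rangle$, where $\{\Delta_1,\dots,\Delta_r\}$ ranges over multisets of segments of characters of $F^\times$, each $\langle\Delta_i\rangle$ being the essentially trivial one-dimensional representation of the corresponding $GL_{m_i}$, and $\times=\mathbf{i}_{(m_1,\dots,m_r)}$ denoting parabolic induction.

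\emph{Behaviour under the product.} Next I would record how the two sides transform under $\times$. On the right, the compatibility of the Harish-Chandra--Howe expansion with parabolic induction, degenerated to $\mathfrak{g}_n$ near the identity, identifies the expansion of $\sigma_1\times\cdots\times\sigma_r$ with the parabolic induction of the distributions $\Theta_{\sigma_j}$; in type $A$ the parabolic induction of a single nilpotent orbital-integral Fourier transform is again a single one --- that of the Lusztig--Spaltenstein induced orbit, whose partition is the entrywise sum $\lambda^{(1)}+\cdots+\lambda^{(r)}$ --- so
\[
c_\mu(\sigma_1\times\cdots\times\sigma_r)=\sum_{\lambda^{(1)}+\cdots+\lambda^{(r)}=\mu}\ \prod_{j=1}^{r} c_{\lambda^{(j)}}(\sigma_j)\;.
\]
On the left, the Bernstein--Zelevinsky geometric lemma filters $\mathbf{r}_\alpha(\sigma_1\times\cdots\times\sigma_r)$ by nonnegative integer matrices $(a_{ij})$ with row sums $\alpha$ and column sums $(m_1,\dots,m_r)$; applying the Whittaker functor of $M_\alpha$ and using that it is multiplicative for products of general linear groups --- the highest Bernstein--Zelevinsky derivative of a product is, by the Leibniz rule, the product of the highest derivatives --- the $(a_{ij})$-term contributes $\prod_j d_{(a_{1j},\dots,a_{kj})}(\sigma_j)$, so
\[
d_\alpha(\sigma_1\times\cdots\times\sigma_r)=\sum_{(a_{ij})}\ \prod_{j=1}^{r} d_{(a_{1j},\dots,a_{kj})}(\sigma_j)\;.
\]
Finally, grouping the columns of a $0$--$1$ matrix with row sums $\alpha$ and column multiset $(\lambda^{(1)})^t\cup\cdots\cup(\lambda^{(r)})^t=(\lambda^{(1)}+\cdots+\lambda^{(r)})^t$ according to the block to which each column belongs gives
\[
s\bigl(\alpha,(\lambda^{(1)}+\cdots+\lambda^{(r)})^t\bigr)=\sum_{(a_{ij})}\ \prod_{j=1}^{r} s\bigl((a_{1j},\dots,a_{kj}),(\lambda^{(j)})^t\bigr)\;.
\]
Combining the three displays, I would conclude that the theorem for $\sigma_1,\dots,\sigma_r$ implies it for $\sigma_1\times\cdots\times\sigma_r$ (noting that $d_{(a_{1j},\dots,a_{kj})}(\sigma_j)$ and $s$ are insensitive to the order of the column composition, so the inductive hypothesis for partitions applies).

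\emph{Base case and conclusion.} It then remains to verify the case $\pi=\langle\Delta\rangle$, a one-dimensional representation of $GL_m$. Here $\Theta_{\langle\Delta\rangle}\equiv 1=\widehat{\mu}_{\mathcal{O}_{(1^m)}}$, so $c_\beta(\langle\Delta\rangle)=\delta_{\beta,(1^m)}$; and $\mathbf{r}_\alpha(\langle\Delta\rangle)$ is a character of $M_\alpha$, which admits a Whittaker functional only when $\alpha=(1^m)$, so $d_\alpha(\langle\Delta\rangle)=\delta_{\alpha,(1^m)}$; since a $0$--$1$ matrix with a single column summing to $m$ forces all its row sums to equal $1$, one has $s(\alpha,(1^m)^t)=s(\alpha,(m))=\delta_{\alpha,(1^m)}$, and the identity holds. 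Induction on $r$ then finishes the argument.

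\emph{Main obstacle.} The step I expect to be the main obstacle is the right-hand side of the inductive step: invoking, with the correct normalization of the $\widehat{\mu}_{\mathcal{O}}$, both the reduction of Harish-Chandra's induced-character formula to the Lie algebra near the identity and the multiplicity-one form of parabolic (Lusztig--Spaltenstein) induction of nilpotent orbital integrals in type $A$. A secondary task is keeping the transpose conventions relating $\mathcal{O}_\beta$ to $\mathfrak{n}_{\beta^t}$ consistent throughout, and checking the order-invariance of $d_{(a_{1j},\dots,a_{kj})}(\sigma_j)$ used above; both are routine but must be done carefully.
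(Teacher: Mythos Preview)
Your proposal is correct and follows essentially the same approach as the paper: both reduce to the Zelevinsky-standard basis via additivity of $d_\alpha$ and $c_\beta$, then compute $d_\alpha$ on standards via the Bernstein--Zelevinsky geometric lemma and $c_\beta$ on standards via the Henniart--Vign\'eras multiplicativity of the local character expansion (your ``main obstacle''), using that one-dimensional $G_m$-representations have $c_\beta = \delta_{\beta,(1^m)}$. The only cosmetic difference is that the paper computes both invariants directly on a general standard $\chi_1^{(\alpha_1)}\times\cdots\times\chi_k^{(\alpha_k)}$ in one shot, whereas you package the same computation as an induction on $r$ together with a product identity for $s(\alpha,\cdot)$; your extra combinatorial identity is correct but avoidable.
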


We invite the reader to the discussion in the introduction section of \cite{me-triang} on how the unitriangular (with respect to the topological closure order on nilpotent orbits) formula of Theorem \ref{thm:main} naturally refines the renowned result from \cite{mw87} concerning the relationship between character expansions and generalized Whittaker models.

Back in \cite{me-triang}, we utilized Hecke algebra techniques to reduce the problem, for the case of the Iwahori-spherical Bernstein block, to a matter involving the ring of symmetric functions. In Section \ref{sect:zel}, we instead highlight an alternative computation method for both sets of invariants, employing the basis of Zelevinsky-standard representations for the Grothendieck groups of $G_n$.

\subsection{Kazhdan-Lusztig theory}
The second goal of this note is to highlight a formula for producing the explicit values of the expansion $\{c_{\alpha}(\pi)\}_{\alpha\in P(n)}$, for a principal representation $\pi$, in terms of Kazhdan-Lusztig polynomials of a relevant permutations group.

We say that an irreducible $G_n$-representation $\pi$ is \textit{integral principal}, when it is principal and the irreducible subquotients of $\mathbf{r}_{(1,\ldots,1)}(\pi)$ are (unramified) characters $\chi$ of $(F^\times)^n$ of the form $\chi(a_1,\ldots, a_n) = \prod_{i=1}^n |a_i|_F^{t_i}$ with $t_i\in \mathbb{Z}$ ($|\,|_F$ being the $p$-adic norm).

We write $\irr_{\inte}(G_n)$ for the set of isomorphism classes of irreducible integral principal $G_n$-representations. As detailed at the head of Section \ref{sect:kl}, computations of local character expansions for principal representations are easily reduced to the integral case.

Let us now restate the Zelevinsky classification for $\irr_{\inte}(G_n)$.

For $k\geq1$, we consider the integer lattice $\mathbb{Z}^k$ with its cone $D_k= \{(\alpha_1 \geq \ldots \geq \alpha_k)\}\subset \mathbb{Z}^k$.

Considering the natural action of $S_k$ on $\mathbb{Z}^k$, the cone $D_k$ becomes a full set of orbit representatives. We denote the projection $p:\mathbb{Z}^k\to D_k$ that takes a tuple to its representative.

For $x_0\in \mathbb{Z}^k$, we would also like to consider a shifted $S_k$-action on $\mathbb{Z}^k$ given as
\[
\omega \ast_{x_0} x := \omega\cdot (x-x_0) + x_0\;,\quad \mbox{for}\;x\in \mathbb{Z}^k\;.
\]

We further denote the set
\[
C(n,k) = \{(\alpha_1,\ldots,\alpha_k)\;:\alpha_i\in \mathbb{Z}_{> 0}\;, \alpha_1 + \ldots + \alpha_k = n\}\subset \mathbb{Z}^k\;
\]
of compositions of $n$ into $k$ parts, the set of pairs
\[
M_{n,k} = \{(\lambda, \eta)\in \mathbb{Z}^k \times \mathbb{Z}^k\;:\; \eta-\lambda \in C(n,k)\}\;,
\]
and the set $\mul_{n,k} = M_{n,k}/S_k$ of orbits of the diagonal $S_k$-action on $M_{n,k}$.

An element of $\mul_{n,k}$, represented by a pair $((\lambda_1,\ldots,\lambda_k), (\eta_1,\ldots,\eta_k)) \in M_{n,k}$, can be thought of as a multiset of $k$ intervals $\{[\lambda_i,\eta_i]\}_{i=1}^k$ on the integer line, whose sum of lengths equals to $n$.

The Zelevinsky classification (\cite{Zel}) sets up a bijection
\begin{equation}\label{eq:zel}
Z: \bigcup_{k\geq 1} \mul_{n,k} \;\to\; \irr_{\inte}(G_n)\;.
\end{equation}







For a partition $\mu = (\mu_1,\ldots,\mu_t)\in P(n)$ with $t\leq k$, we consider $\mu$ as an element of $D_k$ by padding its tail with $0$'s if necessary.

\begin{theorem}\label{thm:kl}
Let $\pi = Z(\mathfrak{m})\in \irr_{\inte}(G_n)$ be an irreducible integral principal representation, parameterized by the $S_k$-orbit $\mathfrak{m}$ of a pair $(\lambda,\eta)\in M_{n,k}$.

Let $\alpha = \eta-\lambda\in C(n,k)$ be the associated composition. Let $\omega_1,\omega_2\in S_k$ be the permutations for which the product $\omega_1\omega_2$ is longest possible while subject to
\[
\omega_1\cdot \lambda, \,\omega_2^{-1}\cdot\eta \in D_k\;.
\]

Then, for a partition $\mu\in P(n)$ with $\mu_1\leq k$, the identity
\[
c_{\mu}(\pi) = \sum_{ \tau\in S_k\;:\; p(\tau\ast_{-\lambda} \alpha) = \mu^t} \epsilon(\tau) P_{\omega_0 \omega_1\tau\omega_2  ,\, \omega_0 \omega_1\omega_2}(1)
\]
holds, where $P_{\zeta_1,\zeta_2}$ stands for the Kazhdan-Lusztig polynomial attached to a pair of permutations $\zeta_1,\zeta_2\in S_k$, $\epsilon:S_k \to\{\pm1\}$ is the sign map, and $\omega_0\in S_k$ denotes the longest permutation in the group.

For partitions $\mu\in P(n)$ with $\mu_1> k$, we have $c_{\mu}(\pi) =0$.

\end{theorem}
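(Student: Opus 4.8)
The plan is to combine Theorem~\ref{thm:main} with the known Kazhdan--Lusztig description of the Zelevinsky-standard basis expansion in the Grothendieck group, and then invert the resulting linear system. First I would record the two input facts. From Theorem~\ref{thm:main}, the vector $(c_\beta(\pi))_\beta$ is determined by the vector $(d_\alpha(\pi))_\alpha$ via the (invertible, unitriangular for the closure order) matrix $s(\alpha,\beta^t)$; so it suffices to compute the $d_\alpha(\pi)$ for $\pi = Z(\mathfrak m)$ and then apply the inverse of this matrix. The $d_\alpha(\pi)$ in turn factor through the Grothendieck group: since $\mathbf r_\alpha$ and $\Wh$ are exact, $d_\alpha$ is additive on short exact sequences, hence it is a linear functional on $R(G_n)$, and it suffices to know it on the Zelevinsky-standard basis. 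On a Zelevinsky-standard representation the value of $d_\alpha$ is a purely combinatorial count — essentially the number of ways to distribute the defining segments across the blocks of $M_\alpha$ so that the resulting Jacquet module has a Whittaker vector — and when $\pi = Z(\mathfrak m)$ is written in the standard basis with multiplicities given by (specializations at $1$ of) Kazhdan--Lusztig polynomials $P_{\zeta_1,\zeta_2}(1)$ for the symmetric group $S_k$, one gets an explicit double sum.

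Next I would make the segment-distribution count explicit in terms of the shifted $S_k$-action. The key dictionary is: a Zelevinsky-standard representation in the block of $\pi$ corresponds, after fixing the "left endpoints" datum $\lambda$, to an element of the $S_k$-orbit of $\eta$, equivalently to a coset, and the associated nilpotent datum one reads off is governed by $p(\tau \ast_{-\lambda}\alpha)$ as $\tau$ ranges over $S_k$. The multiplicity with which a given standard appears in $Z(\mathfrak m)$ is $P_{\omega_0\omega_1\tau\omega_2,\,\omega_0\omega_1\omega_2}(1)$: this is exactly the Zelevinsky--Kazhdan--Lusztig formula, with the particular permutations $\omega_1,\omega_2$ chosen so that $\omega_1\omega_2$ is longest among those making $\omega_1\cdot\lambda$ and $\omega_2^{-1}\cdot\eta$ dominant — this is the standard trick for writing a $Z(\mathfrak m)$ attached to a non-generic (non-regular) orbit as a KL-theoretic object on the ambient $S_k$ rather than on a parabolic quotient, the conjugation by $\omega_0$ converting the Zelevinsky (socle) normalization into the usual KL convention. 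Assembling: $d_\alpha(\pi) = \sum_\tau \#\{\text{distributions compatible with }\tau\}\cdot P_{\cdots}(1)$, and then the sign-twisted sum over $\tau$ with $p(\tau\ast_{-\lambda}\alpha)=\mu^t$ emerges precisely from applying the inverse of the matrix $s(\alpha,\beta^t)$ — the signs $\epsilon(\tau)$ are the signs in the Möbius-type inversion of the "binary contingency matrix" count $s$.

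For the vanishing statement when $\mu_1 > k$: a representation $\pi$ lying in a block built from $k$ characters of $F^\times$ (i.e.\ $k$ segments) has all of its degenerate Whittaker data supported on orbits $\alpha$ with at most $k$ parts, equivalently $c_\mu(\pi)=0$ unless $\mu^t$ has at most $k$ parts, i.e.\ unless $\mu_1 \le k$; I would deduce this directly from the $\mathbf r_{(1,\ldots,1)}$-support of $\pi$ and the geometry of $\mathfrak n_{\mu^t}\cap\mathcal O_\mu$, or simply note that the sum defining $c_\mu$ is empty since $\tau\ast_{-\lambda}\alpha$ always lies in $\mathbb Z^k$ and so $p$ of it has at most $k$ nonzero parts. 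The main obstacle I anticipate is the bookkeeping in the second step: pinning down the exact permutation $\omega_0\omega_1\tau\omega_2$ (and checking $\omega_1,\omega_2$ are well defined up to the stabilizers of $\lambda$ and $\eta$, so that $P_{\omega_0\omega_1\tau\omega_2,\omega_0\omega_1\omega_2}(1)$ is unambiguous) and verifying that the combinatorial inversion of $s$ produces precisely the sign-weighted sum over the fiber $p(\tau\ast_{-\lambda}\alpha)=\mu^t$ with no residual corrections — this is where a careful compatibility lemma between the transpose involution, the shifted action, and the KL recursion will be needed, and is the step I would write out in full detail in Section~\ref{sect:kl}.
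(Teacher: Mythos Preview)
Your route takes an unnecessary detour and misidentifies the source of the sign $\epsilon(\tau)$. The paper does \emph{not} pass through Theorem~\ref{thm:main} or the invariants $d_\alpha$ at all. Instead it combines Proposition~\ref{prop:klcoeff} (the KL expansion of $[Z(\mathfrak m)]$ in the Zelevinsky-standard basis) directly with Lemma~\ref{lem-1}, which computes $c_\mu$ on a Zelevinsky-standard representation as a Kronecker delta: $c_\mu(\zeta(\lambda(\mathfrak m),\sigma\cdot\eta(\mathfrak m)))$ equals $1$ when the associated composition projects to $\mu^t$ and $0$ otherwise. Since $c_\mu$ is additive on the Grothendieck group, summing over the standards appearing in $[Z(\mathfrak m)]$ already yields the stated formula; the only remaining bookkeeping is the change of variables $\sigma = \tau\omega(\mathfrak m)$ and the conjugation by $\omega_1$ relating $(\lambda,\eta,\alpha)$ to $(\lambda(\mathfrak m),\eta(\mathfrak m),\alpha(\mathfrak m))$.

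The concrete error in your plan is the claim that ``the multiplicity with which a given standard appears in $Z(\mathfrak m)$ is $P_{\omega_0\omega_1\tau\omega_2,\,\omega_0\omega_1\omega_2}(1)$'' and that the sign $\epsilon(\tau)$ then appears from M\"obius-inverting the matrix $s(\alpha,\beta^t)$. In fact Proposition~\ref{prop:klcoeff} gives the \emph{signed} multiplicity $\epsilon(\sigma\omega(\mathfrak m))P_{\omega_0\sigma,\,\omega_0\omega(\mathfrak m)}(1)$; after the substitution $\sigma=\tau\omega(\mathfrak m)$ this is precisely $\epsilon(\tau)P_{\cdots}(1)$. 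The sign is already in the KL expansion and has nothing to do with inverting $s$. Inverting $s$ is an operation on matrices indexed by \emph{partitions}, and cannot manufacture a sign character on \emph{permutations} $\tau\in S_k$. Had you carried the sign correctly through the KL step, your detour via $d_\alpha$ and Lemma~\ref{lem-2} would work (by invertibility of $s$), but it would just reprove Lemma~\ref{lem-1} indirectly: the paper's route avoids $d_\alpha$ and $s$ entirely. Your second argument for the vanishing when $\mu_1>k$ --- that $\tau\ast_{-\lambda}\alpha\in\mathbb Z^k$ forces $p(\cdot)$ to have at most $k$ parts, so the sum is empty --- is fine and matches what the paper's formula gives.
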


Given that Bernstein blocks of $G_n$-representations are inherently equivalent to module categories over type $A$ affine Hecke algebras, it is reasonable to speculate that the formula presented in Theorem \ref{thm:kl} is expressible using Kazhdan-Lusztig polynomials associated with the affine Weyl group of type $A$. However, the author has not encountered a documentation of this particular approach in literature. We suggest further investigation into the role of Kazhdan-Lusztig theory in the context of local character expansion coefficients.



We should also comment on the formula from the perspective of the Langlands reciprocity. While it is clear that the Zelevinsky classification is an early instance of the reciprocity for general linear groups, it involves additional computational issues. Specifically, the Langlands parameter corresponding to a given representation $\pi \in \irr_{\inte}(G_n)$ can be naturally associated with a multisegment $\mathfrak{m}_\pi \in \mul_{k,n}$ through a correspondence of the form $\pi = Z(\widehat{\mathfrak{m}_{\pi}})$, where the mapping $\mathfrak{m} \mapsto \widehat{\mathfrak{m}}$ is the Zelevinsky involution on $\bigcup_{k\geq 1} \mul_{n,k}$, which was algorithmically explicated in \cite{mw-zel}.

Thus, in terms of complexity, the formula of Theorem \ref{thm:kl} yields the local character expansion of principal representations when provided with algorithms for $S_n$-Kazhdan-Lusztig polynomials, complemented by an algorithm for the Zelevinsky involution. This observation aligns with recent insights presented in \cite{ciub21,ciub23}, where it was demonstrated that the wavefront set invariant for further cases of unipotent representations of $p$-adic reductive groups can be directly extracted from the Aubert-Zelevinsky transform of its Langlands parameter.

\subsection{Acknowledgements}

The note has been shaped by insights gained from engaging discussions I had during participation in the July 2022 IMS workshop. I am particularly grateful to Rapha\"{e}l Beuzart-Plessis, Monica Nevins, Emile Okada, and Marie-France Vign\'{e}ras for sharing their perspectives on this matter. I extend my appreciation to the organizers of this wonderful workshop in Singapore.

This research is supported by the Israel Science Foundation (Grant Number: 737/20).

\section{Zelevinsky-standard representations}\label{sect:zel}

Let $\widehat{F^\times}$ be the collection of smooth complex one-dimensional $GL_1(F)$-representations (characters). For $\chi\in \hat{F^\times}$, let $\Re(\chi)\in \mathbb{R}$ be the number for which $|\chi(a)| = |a|_F^{\Re(\chi)}$ holds, for all $a\in F^\times$.

One-dimensional $G_n$-representations are of the form $\chi^{(n)}:= \chi\circ \det$, for $\chi\in \widehat{F^\times}$.

Given a composition $\alpha= (\alpha_1,\ldots,\alpha_k)\in C(n,k)$, in similarity with the partition case, we let $P_\alpha = M_\alpha N_\alpha < G_n$ be the corresponding standard parabolic subgroup. As customary, we treat the corresponding normalized parabolic induction functor $\mathbf{i}_{\alpha}$ as a product functor
\[
\pi_1\times\cdots\times \pi_k = \mathbf{i}_{\alpha}(\pi_1\boxtimes\cdots \boxtimes \pi_k)
\]
that takes $G_{\alpha_1}\times\cdots \times G_{\alpha_k}$-representations to $G_n$-representations.

For forthcoming needs, we would like to consider the larger set
\[
C_0(n,k) = \{(\alpha_1,\ldots,\alpha_k)\;:\alpha_i\in \mathbb{Z}_{\geq 0}\;, \alpha_1 + \ldots + \alpha_k = n\}\subset \mathbb{Z}^k\;
\]
of compositions with possible zero parts.
The definitions of the functors $\mathbf{r}_{\alpha}$ and $\mathbf{i}_{\alpha}$ are extended to $\alpha \in C_0(n,k)$ by considering $G_0$ as a trivial group and its representations as neutral to the parabolic induction product. In particular, for $\chi\in \widehat{F^\times}$, we may take $\chi^{(0)}$ as a trivial representation of $G_0$.

For $\alpha\in C_0(n,k)$ and characters $\chi_1,\ldots,\chi_k\in \widehat{F^\times}$, a $G_n$-representation
\begin{equation}\label{eq:zeta1}
\zeta = \zeta(\alpha,\chi_1,\ldots,\chi_k):= \chi_1^{(\alpha_1)}\times \cdots \times \chi_k^{(\alpha_k)}
\end{equation}
is constructed. We say that $\zeta$ is a \textit{principal Zelevinsky-standard representation} whenever $\Re(\chi_i) \geq \Re(\chi_j)$ holds, for all $1\leq i< j \leq k$.

In \cite{Zel} it is shown that each irreducible $G_n$-representation $\pi$ corresponds to a unique Zelevinsky-standard representation $\zeta(\pi)$, in which $\pi$ appears as a sub-representation. It is also a standard corollary that the collection of Zelevinsky-standard representations forms a basis for the Gorthendieck group of the category of finite-length $G_n$-representations.

Finally, it is a standard outcome of the theory of Bernstein blocks that the collection of principal Zelevinsky-standard representations forms a basis for the Gorthendieck group of the direct summand of the category that is generated by the principal irreducible representations.

Note now, that due to trace additivity and exactness of Whittaker and Jacquet functors, both $c_\mu(\cdot)$ and $d_\mu(\cdot)$, for $\mu\in P(n)$, are additive functionals on the above mentioned Grothendieck groups.

This short discussion sums up to the following reduction.

\begin{proposition}
Theorem \ref{thm:main} will follow, if the identity in its statement is proved with $\pi$ being a principal Zelevinsky-standard representation, rather than an irreducible representation.
\end{proposition}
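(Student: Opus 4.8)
The plan is to recognize the identity of Theorem~\ref{thm:main} as an equality between two $\mathbb{Z}$-linear functionals on a single Grothendieck group, and to exploit the fact — recorded in the discussion above — that this group admits \emph{both} the principal irreducible representations and the principal Zelevinsky-standard representations as bases. Concretely, I would fix a partition $\alpha\in P(n)$ and let $R$ denote the Grothendieck group of the direct summand of the category of finite-length $G_n$-representations generated by the principal irreducibles. As already noted, trace additivity together with the exactness of the Jacquet functors $\mathbf{r}_\mu$ and of the Whittaker functor $\Wh$, and the uniqueness of the Harish-Chandra--Howe expansion (the germs $\widehat{\mu}_{\mathcal{O}_\beta}$ being linearly independent near the identity), guarantee that each of $d_\alpha$ and $c_\beta$, $\beta\in P(n)$, factors through a well-defined additive functional on $R$. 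Hence the expression
\[
L_\alpha \;:=\; d_\alpha \;-\; \sum_{\beta\in P(n)} s(\alpha,\beta^t)\, c_\beta
\]
is a well-defined element of $\Hom_{\mathbb{Z}}(R,\mathbb{Z})$, and Theorem~\ref{thm:main} asserts exactly that $L_\alpha([\pi]) = 0$ for every principal irreducible $\pi$ and every $\alpha$.

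Next I would invoke the structural input recalled above: the classes of the principal Zelevinsky-standard representations span — indeed form a $\mathbb{Z}$-basis of — $R$. A $\mathbb{Z}$-linear functional that vanishes on a spanning set is identically zero, so, granting the hypothesis of the proposition, namely that the identity of Theorem~\ref{thm:main} holds whenever $\pi$ is a principal Zelevinsky-standard representation (equivalently, $L_\alpha([\zeta]) = 0$ for every such $\zeta$), we conclude $L_\alpha \equiv 0$ on all of $R$. Since the principal block is a genuine direct summand, the class $[\pi]$ of any principal irreducible representation lies in $R$; evaluating the identically-zero functional $L_\alpha$ at $[\pi]$ recovers $d_\alpha(\pi) = \sum_{\beta} s(\alpha,\beta^t) c_\beta(\pi)$. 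As $\alpha$ was arbitrary, this is precisely Theorem~\ref{thm:main}.

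There is no real obstacle here — the argument is a formal base change between the ``standard'' and ``irreducible'' bases of the principal-block Grothendieck group, of a type ubiquitous in representation theory. The only points that merit care are the two inputs used above, both already available: the additivity of the coefficients $c_\beta(\cdot)$ along short exact sequences, which ultimately rests on the linear independence of the Fourier transforms of nilpotent orbital integrals as distribution germs at the origin; and the fact that the principal Zelevinsky-standard representations constitute a basis of the relevant Grothendieck group, which is a combination of Zelevinsky's classification with the block decomposition of the category of smooth representations. With these in hand the reduction is immediate, and the remainder of the argument may safely restrict attention to $\pi = \zeta(\alpha,\chi_1,\ldots,\chi_k)$.
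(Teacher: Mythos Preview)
Your argument is correct and is essentially the same as the paper's: both $d_\alpha$ and the $c_\beta$ extend to additive functionals on the principal-block Grothendieck group, and since the principal Zelevinsky-standard representations form a basis of that group, verifying the identity on them suffices. The paper records exactly this linear-algebra reduction in the discussion immediately preceding the proposition.
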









Indeed, Theorem \ref{thm:main} now clearly follows from the previous proposition and the following two lemmas.

\begin{lemma}\label{lem-2}

For a principal Zelevinsky-standard representation $\zeta =\zeta(\widetilde{\beta} ,\chi_1,\ldots,\chi_k)$ with $\widetilde{\beta}\in C(n,k)$, and a given partition $\alpha\in P(n)$, we have\footnote{Note that the application of $p$ in the formula is somewhat redundant, since $s(\alpha,\widetilde{\beta})$ may be defined directly for compositions that are not necessarily partitions.}
\[
d_{\alpha}(\zeta) = s(\alpha, p(\widetilde{\beta}))\;.
\]
\end{lemma}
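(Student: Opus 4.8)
The plan is to compute both sides of the claimed identity directly and match them. The right-hand side, $s(\alpha, p(\widetilde\beta))$, counts $\mathbb{Z}_{\ge 0}$-valued matrices with row sums given by $\alpha$ and column sums given by the parts of $\widetilde\beta$ (reordered into a partition, though as the footnote observes the count is insensitive to the ordering of $\widetilde\beta$). The left-hand side is $\dim_{\mathbb{C}} \Wh(\mathbf{r}_\alpha(\zeta))$ for $\zeta = \chi_1^{(\widetilde\beta_1)} \times \cdots \times \chi_k^{(\widetilde\beta_k)}$. The first step is to reduce the computation of $\mathbf{r}_\alpha(\zeta)$ to combinatorics via the Geometric Lemma (Bernstein--Zelevinsky). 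Applying the Jacquet functor $\mathbf{r}_\alpha$ to the parabolically induced representation $\zeta$ produces, by the geometric lemma, a representation with a filtration whose subquotients are indexed by double cosets, or equivalently by the ways of ``shuffling'' the constituent data; each subquotient is itself a parabolic induction of twisted Jacquet modules of the one-dimensional pieces $\chi_i^{(\widetilde\beta_i)}$.

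The key simplification is that the building blocks $\chi_i^{(\widetilde\beta_i)}$ are one-dimensional, hence their only nonzero Jacquet modules with respect to a composition $\gamma$ of $\widetilde\beta_i$ are again one-dimensional — namely $\chi_i^{(\gamma_1)} \boxtimes \cdots \boxtimes \chi_i^{(\gamma_r)}$ — and they contribute exactly one-dimensional (generic, when all parts are positive, but here we should track degenerate pieces carefully) spaces. So $\mathbf{r}_\alpha(\zeta)$ has a filtration whose subquotients correspond precisely to the choices of how to split each $\widetilde\beta_i$ among the $k$ blocks of $\alpha$ — in other words, to nonnegative integer matrices $A = (a_{ij})$ with column sums $\widetilde\beta_j$ (the split of the $j$-th block) and row sums $\alpha_i$ (the total landing in the $i$-th block of the target Levi $M_\alpha$). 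This is exactly the set enumerated by $s(\alpha, p(\widetilde\beta))$; the first main step is to make this bookkeeping precise and verify that the geometric-lemma filtration has no extra subquotients for one-dimensional inputs.

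The second main step is to apply the Whittaker functor $\Wh$ to this filtration on $M_\alpha$-representations and show each relevant subquotient contributes exactly $1$, and contributes $0$ otherwise, so that $\dim\Wh(\mathbf{r}_\alpha(\zeta))$ equals the number of such matrices $A$. Here one uses exactness of $\Wh$ (noted in the excerpt) together with the fact that $\Wh$ of a product of representations $\sigma_1 \boxtimes \cdots \boxtimes \sigma_k$ on a block-diagonal Levi is a tensor product, combined with the hereditary property of Whittaker models under parabolic induction on each $GL$ factor (the Rodier/Bernstein-Zelevinsky type derivative argument, or more simply: the top Bernstein--Zelevinsky derivative of a parabolically induced representation of $GL$ is the product of the top derivatives, and the subquotient corresponding to matrix $A$ has, on its $i$-th block $G_{\alpha_i}$, the representation $\chi_1^{(a_{1i})} \times \cdots \times \chi_k^{(a_{ki})}$, whose space of Whittaker functionals for the full unipotent of $G_{\alpha_i}$ is one-dimensional). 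The main obstacle I anticipate is precisely this last point — carefully verifying that \emph{every} subquotient of the geometric-lemma filtration carries a one-dimensional Whittaker space, with no cancellation between subquotients and no subquotient that fails to be generic — since a priori a product such as $\chi_1^{(a_{1i})} \times \cdots \times \chi_k^{(a_{ki})}$ could have a non-generic constituent, but what matters is that $\Wh$ applied to the \emph{whole} induced module (not its irreducible constituents) is one-dimensional, which follows from exactness of $\Wh$ and its behavior on principal-series-type inductions of $GL_{\alpha_i}$. Once this is in hand, both sides equal $\#\{A\}$ and the lemma follows.
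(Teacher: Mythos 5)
Your overall strategy --- the geometric lemma applied to $\mathbf{r}_\alpha(\zeta)$, exactness of $\Wh$, and the hereditary property of Whittaker models under parabolic induction --- is the same as the paper's. However, the final counting step contains a genuine error, which propagates from a misreading of the right-hand side.

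First, $s(\alpha,\beta)$ as defined in \eqref{eq:graph} counts \emph{subsets} $A\subset\{1,\dots,k\}\times\{1,\dots,l\}$ with prescribed row and column counts, i.e.\ $\{0,1\}$-matrices with row sums $\alpha_i$ and column sums $\beta_j$ --- not arbitrary $\mathbb{Z}_{\geq 0}$-valued matrices with those margins, which is a strictly larger number in general. Your identification of the right-hand side with the count of all nonnegative integer matrices is therefore incorrect.

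Second, and correspondingly, your claim that the subquotient attached to a matrix $A=(a_{ij})$ always carries a one-dimensional Whittaker space on each block is false. The block representation $\chi_1^{(a_{1i})}\times\cdots\times\chi_k^{(a_{ki})}$ is induced from the one-dimensional representations $\chi_j^{(a_{ji})}$ of $G_{a_{ji}}$, and by the very hereditary property you invoke, its Whittaker dimension is the product $\prod_j \dim\Wh\bigl(\chi_j^{(a_{ji})}\bigr)$. A one-dimensional representation of $G_m$ is generic if and only if $m\leq 1$, so this product equals $1$ when all $a_{ji}\in\{0,1\}$ and equals $0$ as soon as some $a_{ji}\geq 2$. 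Your worry about non-generic \emph{constituents} is aimed at the wrong issue: when some $a_{ji}\geq 2$ the whole induced module is degenerate, because it is induced from degenerate data rather than from characters of the torus; it is not a ``principal-series-type induction.'' A minimal counterexample to your version: for $\zeta=\chi^{(2)}$ on $G_2$ and $\alpha=(2)$ one has $d_{(2)}(\zeta)=\dim\Wh(\chi\circ\det)=0=s((2),(2))$, whereas the number of nonnegative $1\times1$ matrices with both margins equal to $2$ is $1$. Restricting the count to matrices with entries in $\{0,1\}$ --- which is exactly what the genericity criterion forces and what \eqref{eq:graph} counts --- closes the argument as in the paper.
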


\begin{proof}

We apply the basic Mackey theory for $p$-adic general linear groups, as stated for example in  \cite[2.2]{LM2} (a case of the Bernstein-Zelevinsky geometric lemma). It follows that the $M_\alpha= M_{(\alpha_1,\ldots,\alpha_l)}$-representation $\mathbf{r}_{\alpha}(\zeta)$ is decomposed into sub-quotients of the form
\[
(\chi_{1,1}\times \cdots \times\chi_{k,1})\boxtimes\cdots \boxtimes (\chi_{1,l}\times \cdots \times \chi_{k,l})\;,
\]
for all possible $k\times l$ matrices $A = (\chi_{i,j})$ of representations, such that $\chi_{i,1} \boxtimes \cdots \boxtimes  \chi_{i,l}$ is an irreducible sub-quotient of $\mathbf{r}_{\gamma_{i}}(\chi_i^{(\beta_i)})$, for all $i=1,\ldots,k$, and choices of $\gamma_i\in C_0(\beta_i,l)$ for which the group ranks fit.

Let $\mathcal{A}$ denote the set of such matrices $A$, up to isomorphism of their entries.


By exactness properties of the Whittaker functor, $d_\alpha(\zeta)$ will count the cardinality of the subset $\mathcal{A}_{\mathrm{gen}}\subset \mathcal{A}$, consisting of matrices all of whose entries are generic representations (assuming $G_0$-representations are counted as generic).

Now, since all $\chi_i^{(\beta_i)}$ are one-dimensional, so are all possible entries of matrices in $\mathcal{A}$. Since one-dimensional $G_m$-representations are generic, if and only if, $m\leq 1$, the set $\mathcal{A}_{\mathrm{gen}}$ is identified with the set of $k\times l$ matrices with entries in $\{0,1\}$, for which rows sum up to the values of $\beta_i$ while columns sum up to the values of $\alpha_j$. The cardinality of this set is given by the number $s(p(\widetilde{\beta}),\alpha)= s(\alpha,p(\widetilde{\beta}))$.

\end{proof}

\begin{lemma}\label{lem-1}
For a principal Zelevinsky-standard representation $\zeta =\zeta(\alpha ,\chi_1,\ldots,\chi_k)$ with $\alpha\in C(n,k)$, and a given partition $\beta\in P(n)$, we have
\[
c_{\beta}(\zeta) = \left\{ \begin{array}{ll}  1 &  p(\alpha) = \beta^t  \\ 0 & p(\alpha) \neq \beta^t  \end{array}  \right.\;.
\]

\end{lemma}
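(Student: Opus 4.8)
The plan is to compute the local character expansion of a principal Zelevinsky-standard representation $\zeta = \zeta(\alpha,\chi_1,\ldots,\chi_k)$ directly, exploiting the fact that $\zeta$ is a product $\chi_1^{(\alpha_1)}\times\cdots\times\chi_k^{(\alpha_k)}$ of one-dimensional representations. The key structural input is that the Harish-Chandra--Howe local character expansion behaves predictably under parabolic induction: if $\sigma$ is a finite-length $M_\alpha$-representation with expansion coefficients indexed by nilpotent orbits of $\mathfrak{m}_\alpha$, then the coefficients of $\mathbf{i}_\alpha(\sigma)$ are obtained by the induction map on nilpotent orbits (Lusztig--Spaltenstein induction), which for type $A$ general linear groups is explicit combinatorially. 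So the first step is to recall (or cite) how $c_\bullet(\cdot)$ transforms under parabolic induction, reducing the problem to understanding the expansion of each factor $\chi_i^{(\alpha_i)}$, a one-dimensional representation of $G_{\alpha_i}$.

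The second step handles the single factor. A one-dimensional representation $\chi^{(m)} = \chi\circ\det$ of $G_m$ has a character that is, up to the unramified twist which is irrelevant near the identity (since $|\det(1+X)|_F = 1$ for $X$ small), just the trivial representation's contribution. The trivial representation of $G_m$ has local character expansion consisting of a single term: $c_{(m)}(\mathbf{1}) = 1$ for the partition $(m)$ (equivalently the zero/trivial orbit $\mathcal{O}_{(1^m)}$ under the $\alpha\mapsto\alpha^t$ normalization in this paper — I need to be careful to track which convention of $\mathcal{O}_\alpha$ is in force, since the paper uses $\mathfrak{n}_{\alpha^t}\cap\mathcal{O}_\alpha$ dense), and all other coefficients vanish. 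This is the classical fact that the trivial representation's character expansion is supported on the zero orbit with coefficient $1$.

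The third step assembles these via the induction map. Inducing the ``$i$-th trivial-orbit'' datum from $\prod_i G_{\alpha_i}$ up to $G_n$ gives the orbit $\mathcal{O}_{p(\alpha)}$ — the Richardson orbit attached to the parabolic $P_\alpha$, which in type $A$ is the orbit whose partition is the transpose of the partition of block sizes. Tracking the normalization of the paper, in which $\mathcal{O}_\beta$ is characterized by density in $\mathfrak{n}_{\beta^t}$, the Richardson orbit of $P_\alpha$ is exactly $\mathcal{O}_\beta$ with $\beta^t = p(\alpha)$, i.e. $\beta = p(\alpha)^t$; and the induction is ``irreducible'' in the sense that the induced datum has multiplicity one on that single orbit with no lower-order terms. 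Hence $c_\beta(\zeta) = 1$ when $\beta^t = p(\alpha)$ and $0$ otherwise, which is the claimed formula.

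The main obstacle I anticipate is making the parabolic-induction behavior of the Harish-Chandra--Howe expansion precise and correctly normalized — in particular, verifying that there are genuinely no lower-order (smaller-orbit) correction terms when inducing the trivial-orbit data, and pinning down the transpose conventions so that ``Richardson orbit of $P_\alpha$'' lands on $\beta^t = p(\alpha)$ rather than its transpose. An alternative, perhaps cleaner, route avoiding deep results on induction of character expansions would be to invoke the Mœglin--Waldspurger description of the leading term of the expansion together with a separate argument (e.g. via Jacquet modules, paralleling Lemma \ref{lem-2}) that the expansion of $\zeta$ has \emph{only} a leading term; I would present whichever of these the paper's later machinery supports most directly, but I expect the induction-of-orbits argument to be the intended one.
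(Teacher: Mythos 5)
Your proposal is correct and follows essentially the same route as the paper: the paper reduces to the one-dimensional factors via the Henniart--Vign\'{e}ras multiplicativity of local character expansions under parabolic induction (Proposition \ref{prop:h-v}), observes that each $\chi_i^{(\alpha_i)}$ has expansion supported on the zero orbit $\kappa(\alpha_i)=(1,\ldots,1)$, and checks that $p(\kappa(\alpha_1)+\cdots+\kappa(\alpha_k))=\alpha^t$ --- exactly your ``induction of trivial-orbit data lands on the Richardson orbit'' computation, with the absence of lower-order terms being automatic since the Henniart--Vign\'{e}ras sum has a single nonzero summand. The only point you leave open (pinning the coefficient to exactly $1$ rather than merely nonzero) is handled in the paper by a normalization remark rather than by argument.
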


The proof of Lemma \ref{lem-1} will follow readily from the multiplicative nature of local character expansions, as was recently explicated by Henniart-Vign\'{e}ras. We recall it here in a form convenient for our needs.

\begin{proposition}[Theorem 1.5 \cite{henvig}]\label{prop:h-v}\footnote{  In \cite{henvig}, notation for $c_\mu$ corresponds to our $c_{\mu^t}$, which results in a visible difference between sources in descriptions of the multiplicative property.}
Given a tuple of finite-length $G_{\alpha_i}$-representations $\pi_i$, $i=1,\ldots,k$, with $(\alpha_1,\ldots,\alpha_k)\in C(n,k)$, and a partition $\mu\in P(n)$, we have
\[
c_{\mu}(\pi_1\times\cdots \times \pi_k) = \sum c_{\beta^1}(\pi_1)\cdots c_{\beta^k}(\pi_k)\;,
\]
where the sum is over all tuples of partitions $\beta^i \in P(\alpha_i)$, $i=1,\ldots,k$, that satisfy
\[
p(\beta^1 + \ldots + \beta^k) = \mu\;.
\]
Here, partitions are considered as elements of the lattice $\mathbb{Z}^n$ through $0$ padding as before.
\end{proposition}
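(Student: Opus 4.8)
The plan is to derive the formula from the behavior of the Harish-Chandra--Howe expansion under parabolic induction, transported to the Lie algebra. Since both the operation $(\pi_1,\ldots,\pi_k)\mapsto\pi_1\times\cdots\times\pi_k$ and the combinatorial operation $(\beta^1,\ldots,\beta^k)\mapsto p(\beta^1+\cdots+\beta^k)$ are associative, I would first reduce to the case $k=2$ and then argue by induction on $k$.

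For $k=2$, set $M=M_{(\alpha_1,\alpha_2)}$, $\mathfrak m=\Lie(M)=\mathfrak{gl}_{\alpha_1}\oplus\mathfrak{gl}_{\alpha_2}$, and $\sigma=\pi_1\boxtimes\pi_2$, so that $\pi_1\times\pi_2=\mathbf{i}_{(\alpha_1,\alpha_2)}(\sigma)$. The next step is to invoke the character formula for parabolic induction (van Dijk): near the identity, and after transport to the Lie algebra via a suitable exponential map, $\Theta_{\pi_1\times\pi_2}$ equals $\operatorname{ind}_M^{G_n}(\Theta_\sigma)$, where $\operatorname{ind}_M^{G_n}$ is the normalized parabolic induction of $\Ad$-invariant distributions, i.e.\ push-forward along $G_n\times_{P_{(\alpha_1,\alpha_2)}}(\mathfrak m\oplus\mathfrak n_{(\alpha_1,\alpha_2)})\to\mathfrak g_n$. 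I would then expand $\Theta_\sigma=\Theta_{\pi_1}\cdot\Theta_{\pi_2}$ near the identity of $M$ using the local expansions of $\pi_1$ and $\pi_2$; since the nilpotent $\Ad(M)$-orbits are exactly the products $\mathcal{O}_{\beta^1}\times\mathcal{O}_{\beta^2}$ with $\beta^i\in P(\alpha_i)$, and the Fourier transform on $\mathfrak m$ factors through the two summands, this yields $\Theta_\sigma\bigl(1+(X_1,X_2)\bigr)=\sum_{\beta^1,\beta^2} c_{\beta^1}(\pi_1)c_{\beta^2}(\pi_2)\,\widehat{\mu}_{\mathcal{O}_{\beta^1}\times\mathcal{O}_{\beta^2}}(X_1,X_2)$ for $(X_1,X_2)$ near $0$.

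The heart of the matter is the compatibility of $\operatorname{ind}_M^{G_n}$ with nilpotent orbital integrals: I need $\operatorname{ind}_M^{G_n}\bigl(\widehat{\mu}_{\mathcal{O}_{\beta^1}\times\mathcal{O}_{\beta^2}}\bigr)=\widehat{\mu}_{\mathcal{O}_{p(\beta^1+\beta^2)}}$. I would establish this in two moves. First, $\operatorname{ind}_M^{G_n}$ commutes with the Fourier transform (Harish-Chandra), so it suffices to show $\operatorname{ind}_M^{G_n}(\mu_{\mathcal{O}_{\beta^1}\times\mathcal{O}_{\beta^2}})=\mu_{\mathcal{O}_{p(\beta^1+\beta^2)}}$ as $\Ad(G_n)$-invariant measures. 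Second, since $\operatorname{ind}_M^{G_n}$ of an orbital integral is a push-forward of a measure, it is supported on the closure of $\Ad(G_n)\cdot\bigl(\mathcal{O}_{\beta^1}\times\mathcal{O}_{\beta^2}+\mathfrak n_{(\alpha_1,\alpha_2)}\bigr)$, which is the closure of the Lusztig--Spaltenstein induced orbit; in type $A$ this induced orbit is $\mathcal{O}_{p(\beta^1+\beta^2)}$ (addition of partitions as weakly decreasing sequences), so the induced measure, being $G_n$-invariant, supported on a single orbit-closure, and smooth along the open orbit, is a scalar multiple of that orbit's orbital integral. The genuinely delicate point — and, I expect, the main obstacle — is that with the normalizations of \eqref{eq:expa} the scalar is exactly $1$ and no orbital integrals of smaller orbits intervene; this is the bookkeeping carried out in \cite{henvig} (resting on the $p$-adic descent theory for nilpotent orbital integrals). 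As a cross-check of the constant one can test on $\pi_1=\pi_2=\mathbf 1_{\GL_1}$, where $\pi_1\times\pi_2$ has composition factors $\mathbf 1_{\GL_2}$ and the Steinberg $\mathrm{St}_2$, whose classical expansions ($c_{(1,1)}(\mathbf 1_{\GL_2})=1$, $c_{(2)}(\mathrm{St}_2)=1$, $c_{(1,1)}(\mathrm{St}_2)=-1$) force the value $1$.

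Granting this identity, the conclusion is immediate: applying $\operatorname{ind}_M^{G_n}$ term by term to the expansion of $\Theta_\sigma$ gives
\[
\Theta_{\pi_1\times\pi_2}(1+X)=\sum_{\beta^1\in P(\alpha_1)}\ \sum_{\beta^2\in P(\alpha_2)} c_{\beta^1}(\pi_1)\,c_{\beta^2}(\pi_2)\,\widehat{\mu}_{\mathcal{O}_{p(\beta^1+\beta^2)}}(X)
\]
for $X$ near $0$. Grouping by the value $\mu=p(\beta^1+\beta^2)$ and using the linear independence of the germs at $0$ of $\{\widehat{\mu}_{\mathcal{O}_\mu}\}_{\mu\in P(n)}$ (Harish-Chandra) identifies $c_\mu(\pi_1\times\pi_2)$ with the asserted sum, and induction on $k$ finishes the argument. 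I note that the application of $p$ is harmless (a sum of weakly decreasing sequences is weakly decreasing), and the absence of any transposition reflects the present orbit convention — the transpose appearing in \cite{henvig} comes from the opposite convention recorded in the footnote to this proposition.
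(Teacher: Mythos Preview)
The paper does not prove this proposition at all: it is quoted verbatim as Theorem~1.5 of \cite{henvig} and used as a black box. There is therefore no ``paper's own proof'' to compare your proposal against.

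Your sketch is a reasonable outline of the argument one expects to find in \cite{henvig} --- van Dijk's character formula for parabolic induction, compatibility of Fourier transform with parabolic induction of invariant distributions, and identification of the Lusztig--Spaltenstein induced nilpotent orbit in type $A$ as $\mathcal{O}_{p(\beta^1+\beta^2)}$ --- and you correctly flag the normalization of measures as the only genuinely delicate point. But since the present paper simply imports the result, your write-up goes well beyond what the paper itself supplies.
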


\begin{remark}
We have until now left the coefficients $c_{\mu}(\cdot)$ defined only up to a constant factor, without specifying a particular normalization. Indeed, Proposition \ref{prop:h-v} remains valid when a specific normalization is imposed, as outlined in \cite{henvig}. However, instead of delving into the details of this normalization, which would require us to deviate somewhat from the main discussion, we opt to interpret Lemma \ref{lem-1} as a convenient way to pin down the constants.

In essence, it suffices to avoid fixing a normalization while regarding the lemma as a statement on vanishing behavior of coefficients, under the assumption that Proposition \ref{prop:h-v} is valid.
\end{remark}

\begin{proof}[Proof of Lemma \ref{lem-1}]

Since all $\chi_i^{(\alpha_i)}$ are one-dimensional representations, the only contribution for their local character expansions comes from the zero orbit. In other words, we have $c_{\beta}( \chi_i^{(\alpha_i)}) \neq 0$, if and only if, $\beta= \kappa(\alpha_i)$, where $\kappa(r) = (1,\ldots,1) \in P(r)$ stands for the minimal partition of an integer $r\geq1$.


By Proposition \ref{prop:h-v}, it now follows that the only non-vanishing local character expansion coefficient of the Zelevinsky-standard representation $\zeta$ is parameterized by the partition
\[
\mu = p(\kappa(\alpha_1) + \ldots + \kappa(\alpha_k))\in P(n)\;.
\]
It is easy to verify that $\mu = \alpha^t$.

\end{proof}

\section{Explicit expansions}\label{sect:kl}



The theory of \cite{Zel} stipulates that any principal irreducible $G_n$-representation $\pi$ can be constructed in the form
\[
\pi = (\chi_1^{(\gamma_1)}\otimes \pi_1) \times\cdots \times (\chi_r^{(\gamma_r)}\otimes \pi_r)\;,
\]
for $\pi_i \in \irr_{\inte}(G_{\gamma_i})$, $i=1,\ldots,r$, and characters $\chi_1,\ldots,\chi_r\in \widehat{F^\times}$.

Since tensoring with one-dimensional representations does not affect local character expansions, it follows from the above decomposition and Proposition \ref{prop:h-v} that the formula of Theorem \ref{thm:kl} suffices to give an explicit local character expansions for all principal irreducible $G_n$-representations, in terms of their Zelevinsky parameters.

Let us now recall the details of the construction of the Zelevinsky bijection \eqref{eq:zel}.

For $t\in \mathbb{Z}$, we write $\nu^t \in \widehat{F^\times}$ for the unramified character given by $\nu^t(a) = |a|^t_F$.

We consider the set of pairs
\[
\overline{M}_{n,k} = \{(\lambda, \eta)\in D_k \times \mathbb{Z}^k\;:\; \eta-\lambda \in C_0(n,k)\}\;.
\]
For a pair $(\lambda,\eta)\in \overline{M}_{n,k}$, a $G_n$-representation is constructed by parabolic induction of the form
\begin{equation}\label{eq:zeta2}
\zeta(\lambda, \eta) = \zeta( \eta- \lambda, \nu^{\frac12(\lambda_1 + \eta_1+1)}, \ldots, \nu^{\frac12(\lambda_k + \eta_k+1)})\;.
\end{equation}

We will say that such $\zeta(\lambda, \eta)$ is an \textit{integral Zelevinsky-standard representation}.

For an orbit $\mathfrak{m}\in \mul_{n,k}$, it is evident that there are unique tuples $\lambda(\mathfrak{m}), \eta(\mathfrak{m})\in D_k$, for which
\[
\left(\lambda(\mathfrak{m}),\omega\cdot \eta(\mathfrak{m})\right)\in \overline{M}_{n,k}
\]
is satisfied, for a permutation $\omega\in S_k$. We set $\omega = \omega(\mathfrak{m})\in S_k$ to be the longest such permutation.

We set
\[
\zeta\left(\mathfrak{m}) = \zeta(\lambda(\mathfrak{m}),\omega(\mathfrak{m})\cdot \eta(\mathfrak{m})\right)\;.
\]

The bijection \eqref{eq:zel} is now constructed by taking $Z(\mathfrak{m})$ to be the unique irreducible sub-representation of $\zeta(\mathfrak{m})$, for each $\mathfrak{m}\in  \mul_{n,k}$.



\begin{remark}
Each integral Zelevinsky-standard representation $\zeta(\m)$ is isomorphic to a principal Zelevinsky-standard representation.

This fact is not tautological, since the condition in \eqref{eq:zeta1} requires $\frac12(\lambda(\m) + \omega(\m)\eta(\m)) \in D_k$ to hold, while the construction of $\zeta(\m)$ implies only the non-equivalent condition $\lambda(\m) \in D_k$. Yet, basic commutation relations on the parabolic induction product provide this fact.

\end{remark}

We now recall the role of Kazhdan-Lusztig polynomials in the character theory of finite-length $G_n$-representations. This perspective was initially shaped by the conjectures of \cite{zel-kl}, which were subsequently proved in the affine Hecke algebra setting in \cite{ginz-book}, by means of geometric realization of standard modules. An account of this theory can be found in the survey \cite[2.3]{LNT}, while a precise translation of these findings into the $p$-adic group context can be read in \cite[Section 3]{me-restriction}. We also recommend the introduction section of \cite{Hender} for an effective overview of this theory.

An alternative approach may employ the KLR categorification of the dual canonical basis for quantum groups, coupled with Lusztig's description of the transition matrix between PBW and canonical bases and categorical equivalences with the representation theory of affine Hecke algebras. For explorations of this approach, the reader may delve into the details presented in \cite[Section 3]{gur-klrrsk} or \cite{me-parabkl}.

With this background in hand, we limit ourselves to a presentation of an established corollary of the aforementioned discussion.

For a finite-length $G_n$-representations $\pi$, let $[\pi]$ denote its projection into the Grothendieck group of $G_n$.

\begin{proposition}\label{prop:klcoeff}
For a multisegment $\mathfrak{m}\in \mul_{n,k}$, we have an expansion
\[
\left[Z(\mathfrak{m})\right]=  \sum_{\sigma\in S_k\;:\; (\lambda(\mathfrak{m}),\,\sigma\cdot \eta(\mathfrak{m}))\in \overline{M}_{n,k}} \epsilon(\sigma \omega) P_{\omega_0\sigma, \omega_0\omega(\mathfrak{m})} (1) \left[\zeta(\lambda(\mathfrak{m}),\,\sigma\cdot \eta(\mathfrak{m}))\right]\;,
\]
of the corresponding irreducible $G_n$-representation on the basis of Zelevinsky-standard representations.

\end{proposition}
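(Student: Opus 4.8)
The plan is to deduce Proposition~\ref{prop:klcoeff} from the known Kazhdan--Lusztig theory for standard modules over affine Hecke algebras of type $A$, specializing and translating it into the $p$-adic Zelevinsky setting. The key point is that, for integral principal representations, the relevant Grothendieck group is identified with a $\Z$-lattice spanned by integral Zelevinsky-standard representations $\zeta(\lambda,\sigma\cdot\eta)$, and the transition matrix between this basis and the basis of irreducibles is governed by Kazhdan--Lusztig polynomials of $S_k$, via the geometric realization of \cite{ginz-book} (as translated in \cite[Section 3]{me-restriction}; see also \cite{LNT,Hender}). Concretely, fixing $\mathfrak{m}\in\mul_{n,k}$ with its canonical pair $(\lambda(\mathfrak{m}),\eta(\mathfrak{m}))\in D_k\times D_k$, the standard modules appearing in the block of $Z(\mathfrak{m})$ are indexed by those $\sigma\in S_k$ with $(\lambda(\mathfrak{m}),\sigma\cdot\eta(\mathfrak{m}))\in\overline{M}_{n,k}$, i.e. with $\sigma\cdot\eta(\mathfrak{m})-\lambda(\mathfrak{m})\in C_0(n,k)$; these $\sigma$ form the relevant parabolic coset in $S_k$.

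First I would set up the dictionary: a multisegment $\mathfrak{m}$ determines a point in the parameter space, the associated standard module is $\zeta(\lambda(\mathfrak{m}),\sigma\cdot\eta(\mathfrak{m}))$, and $Z(\mathfrak{m})$ is its unique irreducible submodule (by the construction of the Zelevinsky bijection recalled just above the statement). Second, I would invoke the Kazhdan--Lusztig/Zelevinsky conjecture in the form: the class of the irreducible equals an alternating sum of classes of standard modules, with coefficients $\pm P_{x,y}(1)$ for suitable $x,y$ in the Weyl group. Here the ``$y$'' is the element corresponding to $\mathfrak{m}$ itself, namely $\omega(\mathfrak{m})$ (the longest permutation realizing $(\lambda(\mathfrak{m}),\omega\cdot\eta(\mathfrak{m}))\in\overline{M}_{n,k}$), and the ``$x$'' ranges over $\sigma$ in the coset; after the standard passage to the dual/opposite order one obtains the pair $(\omega_0\sigma,\omega_0\omega(\mathfrak{m}))$. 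Third, I would pin down the sign: the geometric statement naturally carries a sign $(-1)^{\ell(y)-\ell(x)}$, which in our normalization (where $\zeta(\mathfrak{m})$ uses the \emph{longest} $\omega$) becomes $\epsilon(\sigma\,\omega(\mathfrak{m}))=\epsilon(\sigma)\epsilon(\omega(\mathfrak{m}))$, written as $\epsilon(\sigma\omega)$ in the statement. Matching conventions so that the coefficient of $[\zeta(\mathfrak{m})]$ itself (i.e. $\sigma=\omega(\mathfrak{m})$) comes out to $+1$, using $P_{\omega_0\omega(\mathfrak{m}),\omega_0\omega(\mathfrak{m})}=1$, fixes everything.

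The main obstacle I anticipate is bookkeeping of conventions rather than mathematical depth: the Zelevinsky classification, the Kazhdan--Lusztig literature for affine Hecke algebras, and the $p$-adic translations each use different orderings (Zelevinsky vs.\ Langlands, left vs.\ right cosets, $x\le y$ vs.\ $\omega_0 x\le\omega_0 y$), and the appearance of $\omega_0$ together with the choice of the \emph{longest} permutation $\omega(\mathfrak{m})$ has to be reconciled carefully so that the formula is self-consistent and reduces correctly on the diagonal term. I would handle this by first verifying the formula in the two extreme cases --- when $\mathfrak{m}$ already satisfies $\eta(\mathfrak{m})-\lambda(\mathfrak{m})\in C_0(n,k)$ with $\omega(\mathfrak{m})=\id$ being impossible unless $\eta\in D_k$ forces triviality, so rather: when $Z(\mathfrak{m})=\zeta(\mathfrak{m})$ is itself standard (only one term, coefficient $1$), and when $k$ is small (say $k=2$), where $P_{x,y}(1)\in\{0,1\}$ and everything can be checked by hand. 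Once the conventions are aligned on these cases, the general statement follows verbatim from the cited geometric results, and I would simply cite \cite[Section 3]{me-restriction} (together with \cite{ginz-book}) for the precise form of the translation, remarking that no residual-characteristic hypothesis is needed since the argument is purely in the representation theory of the affine Hecke algebra.
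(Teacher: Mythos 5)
Your proposal is correct and takes essentially the same route as the paper: the statement is an instance of the established Kazhdan--Lusztig character formula for Zelevinsky-standard modules, and the paper's proof is simply a citation to \cite[Corollary 10.1]{LM3}, where the formula appears explicitly in matching conventions. Your more detailed plan of translating the Ginzburg/affine-Hecke-algebra result and pinning down signs on the diagonal term is the same argument, just with the convention bookkeeping done by hand rather than outsourced to a reference.
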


\begin{proof}
  The formula is given explicitly, for example, in \cite[Corollary 10.1]{LM3}.
\end{proof}

\begin{proof}[Proof of Theorem \ref{thm:kl}]
Note, that $\omega(\mathfrak{m}) = \omega_1\omega_2$, $\omega_1 \cdot \lambda = \lambda(\mathfrak{m})$ and $\eta = \omega_2\cdot \eta(\mathfrak{m})$ hold.

Let us set the composition
\[
\alpha(\mathfrak{m}) = \omega(\mathfrak{m})\cdot \eta(\mathfrak{m}) - \lambda(\mathfrak{m}) = \omega_1\cdot \alpha\in  C(n,k)\;.
\]
Since $C_0(n,k)$ is a $S_k$-invariant set, the equivalence
\[
\tau \ast_{-\lambda} \alpha \in C_0(n,k)\; \Leftrightarrow\; (\omega_1\tau\omega_1^{-1})\ast_{-\lambda(\mathfrak{m})} \alpha(\mathfrak{m})\in C_0(n,k)
\]
holds, for all $\tau\in S_k$, with both compositions belonging to the same $S_k$-orbit.

Hence, it suffices to prove the identity
\[
c_{\mu}(\pi) = \sum_{ \tau\in S_k\;:\; p(\tau\ast_{-\lambda(\mathfrak{m})} \alpha(\mathfrak{m})) = \mu^t} \epsilon(\tau) P_{\omega_0\tau \omega(\mathfrak{m}) ,\, \omega_0 \omega(\mathfrak{m})}(1)\;.
\]

Clearly, for $\tau\in S_k$, the conditions $\tau \ast_{-\lambda(\mathfrak{m})} \alpha(\mathfrak{m}) \in C_0(n,k)$ and $(\lambda(\mathfrak{m}),\,\tau\omega(\mathfrak{m})\cdot \eta(\mathfrak{m}))\in \overline{M}_{n,k}$ are equivalent.

Moreover, by Lemma \ref{lem-1}, for $\tau\in S_k$ that satisfies the above condition, we have
\[
c_{\mu}(  \zeta(\lambda(\mathfrak{m}),\,\tau\omega(\mathfrak{m})\cdot \eta(\mathfrak{m}))  ) =  \left\{ \begin{array}{ll}  1 &  p(\tau \ast_{-\lambda(\mathfrak{m})} \alpha(\mathfrak{m})) = \mu^t  \\ 0 & \mbox{otherwise}  \end{array}  \right.\;.
\]

The identity now follows by exactness from Proposition \ref{prop:klcoeff}.

\end{proof}

\bibliographystyle{alpha}
\bibliography{propo2}{}

\def\cprime{$'$}
\begin{thebibliography}{CMBO23}

\bibitem[CG97]{ginz-book}
Neil Chriss and Victor Ginzburg.
\newblock {\em Representation theory and complex geometry}.
\newblock Birkh\"{a}user Boston, Inc., Boston, MA, 1997.

\bibitem[CMBO21]{ciub21}
Dan Ciubotaru, Lucas Mason-Brown, and Emile Okada.
\newblock The wavefront sets of {I}wahori-spherical representations of
  reductive $p$-adic groups.
\newblock {\em arXiv preprint arXiv:2112.14354}, 2021.

\bibitem[CMBO23]{ciub23}
Dan Ciubotaru, Lucas Mason-Brown, and Emile Okada.
\newblock Wavefront sets of unipotent representations of reductive $ p $-adic
  groups {II}.
\newblock {\em arXiv preprint arXiv:2303.10713}, 2023.

\bibitem[Gur19]{me-parabkl}
Maxim Gurevich.
\newblock An identity of parabolic {K}azhdan-{L}usztig polynomials arising from
  square-irreducible modules.
\newblock {\em Journal of the Australian Mathematical Society
  \url{https://doi.org/10.1017/S144678871900017X}}, 2019.

\bibitem[Gur22]{me-restriction}
Maxim Gurevich.
\newblock On restriction of unitarizable representations of general linear
  groups and the non-generic local {G}an-{G}ross-{P}rasad conjecture.
\newblock {\em J. Eur. Math. Soc. (JEMS)}, 24(1):265--302, 2022.

\bibitem[Gur23a]{gur-klrrsk}
Maxim Gurevich.
\newblock Simple modules for quiver {H}ecke algebras and the
  {R}obinson-{S}chensted-{K}nuth correspondence.
\newblock {\em J. Lond. Math. Soc. (2)}, 107(2):704--749, 2023.

\bibitem[Gur23b]{me-triang}
Maxim Gurevich.
\newblock A triangular system for local character expansions of
  {Iwahori-spherical} representations of general linear groups.
\newblock {\em Comptes Rendus. Math\'ematique}, 361:21--30, 2023.

\bibitem[HC99]{hcbook}
Harish-Chandra.
\newblock {\em Admissible invariant distributions on reductive {$p$}-adic
  groups}, volume~16 of {\em University Lecture Series}.
\newblock American Mathematical Society, Providence, RI, 1999.

\bibitem[Hen07]{Hender}
Anthony Henderson.
\newblock Nilpotent orbits of linear and cyclic quivers and {K}azhdan-{L}usztig
  polynomials of type {A}.
\newblock {\em Represent. Theory}, 11:95--121 (electronic), 2007.

\bibitem[How74]{howe74}
Roger Howe.
\newblock The {F}ourier transform and germs of characters (case of {${\rm
  Gl}_{n}$} over a {$p$}-adic field).
\newblock {\em Math. Ann.}, 208:305--322, 1974.

\bibitem[HV23]{henvig}
Guy Henniart and Marie-France Vign{\'e}ras.
\newblock Representations of {$GL_n(D)$} near the identity.
\newblock {\em arXiv preprint arXiv:2305.06581}, 2023.

\bibitem[LM16]{LM2}
Erez Lapid and Alberto M\'{i}nguez.
\newblock On parabolic induction on inner forms of the general linear group
  over a non-archimedean local field.
\newblock {\em Selecta Math. (N.S.)}, 22(4):2347--2400, 2016.

\bibitem[LM18]{LM3}
Erez Lapid and Alberto M\'{i}nguez.
\newblock Geometric conditions for {$\square$}-irreducibility of certain
  representations of the general linear group over a non-archimedean local
  field.
\newblock {\em Adv. Math.}, 339:113--190, 2018.

\bibitem[LNT03]{LNT}
Bernard Leclerc, Maxim Nazarov, and Jean-Yves Thibon.
\newblock Induced representations of affine {H}ecke algebras and canonical
  bases of quantum groups.
\newblock In {\em Studies in memory of {I}ssai {S}chur ({C}hevaleret/{R}ehovot,
  2000)}, volume 210 of {\em Progr. Math.}, pages 115--153. Birkh\"auser
  Boston, Boston, MA, 2003.

\bibitem[MgW86]{mw-zel}
C.~M\oe~glin and J.-L. Waldspurger.
\newblock Sur l'involution de {Z}elevinski.
\newblock {\em J. Reine Angew. Math.}, 372:136--177, 1986.

\bibitem[MgW87]{mw87}
C.~M\oe~glin and J.-L. Waldspurger.
\newblock Mod\`eles de {W}hittaker d\'{e}g\'{e}n\'{e}r\'{e}s pour des groupes
  {$p$}-adiques.
\newblock {\em Math. Z.}, 196(3):427--452, 1987.

\bibitem[Zel80]{Zel}
A.~V. Zelevinsky.
\newblock Induced representations of reductive {$p$}-adic groups. {II}. {O}n
  irreducible representations of {${\rm GL}(n)$}.
\newblock {\em Ann. Sci. \'Ecole Norm. Sup. (4)}, 13(2):165--210, 1980.

\bibitem[Zel81]{zel-kl}
A.~V. Zelevinski\u\i.
\newblock The {$p$}-adic analogue of the {K}azhdan-{L}usztig conjecture.
\newblock {\em Funktsional. Anal. i Prilozhen.}, 15(2):9--21, 96, 1981.

\end{thebibliography}

\end{document}